\newtheorem{theorem}{Theorem}[section]
\newtheorem{lemma}{Lemma}[section]
\newtheorem{corollary}{Corollary}[section]
\newenvironment{proof}{\noindent Proof:}{$\Box$}
\newcommand{\Z}{{\mathbb Z}}
\newcommand{\Q}{{\mathbb Q}}
\newcommand{\N}{{\mathbb N}}
\newcommand{\mult}{{\mathrm {mult}\,}}
\newcommand{\length}{{\mathrm {length}\,}}
\newcommand{\codim}{{\mathrm {codim}\,}}
\newcommand{\Asc}{{\mathcal A}}
\title{
Length and multiplicity of the local cohomology 
with support in a hyperplane arrangement
}
\author{Toshinori Oaku} 
\date{September 4, 2015}
\begin{document}
\maketitle

\begin{abstract}
Let $R$ be the polynomial ring 
in $n$ variables with coefficients in a field $K$ 
of characteristic zero. 
Let $D_n$ be the $n$-th Weyl algebra over $K$. 
Suppose that  $f \in R$ defines a hyperplane arrangement in the affine space $K^n$. 
Then the length and the multiplicity of the first local cohomology group $H^1_{(f)}(R)$ 
as left $D_n$-module coincide and are explicitly expressed in terms of 
the Poincar\'e polynomial or the M\"obius function of the arrangement. 
\end{abstract}

\section{Introduction}

Let $K$ be a field of characteristic zero 
and $R = K[x] = K[x_1,\dots,x_n]$ be the polynomial ring 
in $n$ variables $x = (x_1,\dots,x_n)$. 
For a nonzero polynomial $f \in K[x]$, let us consider 
the first local cohomology group $H^1_{(f)}(R) = R[f^{-1}]/R$, 
where $R[f^{-1}] = R_f$ is the localization of $R$ with respect to 
the multiplicative set $\{f^k \mid k\in \N\}$ with 
$\N = \{0,1,2,3,\dots\}$.  

Let $D_n = R\langle \partial\rangle 
= R\langle \partial_{1},\dots, \partial_{n} \rangle $ 
be the $n$-th Weyl algebra, i.e., the ring of differential 
operators with polynomial coefficients with respect to 
the variables $x$, where we denote 
$\partial = (\partial_{1},\dots,\partial_{x})$ 
with $\partial_{i} = \partial/\partial x_i$ being the
derivation with respect to $x_i$. 
An arbitrary element $P$ of $D_n$ is written in a finite sum
\[
P = \sum_{\alpha\in\N^n} a_\alpha(x)\partial^\alpha 
\quad \mbox{with}\quad
a_\alpha(x) \in K[x], 
\]
where we denote 
$\partial^\alpha 
= \partial_{1}^{\alpha_1}\cdots\partial_{n}^{\alpha_n}$ 
for a multi-index $\alpha = (\alpha_1,\dots,\alpha_n) \in \N^n$.

Then $H_{(f)}^1(R)$ has a natural structure of left $D_n$-module 
and is holonomic (\cite{K}).  
We are mainly concerned with the length and the multiplicity 
of $H_{(f)}^1(R)$ as a left $D_n$-module in case $f$ defines a 
hyperplane arrangement $\Asc$ in the affine space $K^n$; i.e., 
$f$ is a multiple of linear (i.e., first-degree) polynomials. 
In particular, we show that the length and the multiplicity both 
coincide with $\pi(\Asc,1)-1$, where $\pi(\Asc,t)$ is the 
Poincar\'e polynomial of the arrangement $\Asc$.

The length of $R[f^{-1}]$ as left $D_n$-module, 
which equals that of $H^1_{(f)}(R)$ plus one,  
with $f$ defining a hyperplane arrangement was studied 
e.g., in \cite{AB}, \cite{W}. 
The characteristic cycle of the local cohomology with respect to 
an arrangement of linear subvarieties was studied in \cite{AGZ}. 
Although not explicitly stated, 
Corollary 1.3 of \cite{AGZ} should yield main results of this paper. 
We give a direct proof for hyperplane arrangements.

\section{Length and multiplicity}

First let us recall basic facts about the length and the multiplicity 
of a left $D_n$-module following J. Bernstein (\cite{B}). 
Let $M$ be a finitely generated left $D_n$-module. 
A composition series of $M$ of length $k$ is a sequence 
\[
M = M_0 \supset M_1 \supset \cdots \supset M_k = 0
\]
of left $D_n$-submodules such that 
$M_i/M_{i-1}$ is a nonzero simple left $D_n$-module 
(i.e.\ having no proper left $D_n$-submodule other than $0$)  
for $i=1,\dots k$. 
The length of $M$, which we denote by $\length M$, is 
the least length of composition series (if any) of $M$. 
If there is no composition series, the length of $M$ is defined 
to be infinite.  
The length is additive in the sense that if 
\[
 0 \longrightarrow N \longrightarrow M \longrightarrow L \longrightarrow 0
\]
is an exact sequence of left $D_n$-modules of finite length, 
then $\mult M = \mult N + \mult L$ holds. 

For each integer $k$, set
\[
F_k(D_n) = \{\sum_{|\alpha| + |\beta| \leq k}
a_{\alpha\beta}x^\alpha\partial^\beta \mid a_{\alpha\beta}\in K\}. 
\]
In particular, we have $F_k(D_n) = 0$ for $k < 0$ and $F_0(D_n) = K$. 
The filtration $\{F_k(D_n)\}_{k\in\Z}$ 
is called the Bernstein filtration on $D_n$. 

Let $M$ be a finitely generated left $D_n$-module. 
A family $\{F_k(M)\}_{k\in \Z}$ 
of $K$-subspaces of $M$ is called a Bernstein filtration of $M$ if it 
satisfies
\begin{enumerate}
\item 
$F_k(M) \subset F_{k+1}(M)\quad (\forall k\in \Z), 
\qquad \bigcup_{k \in \Z} F_k(M) = M$
\item
$F_j(D_n)F_k(M) \subset F_{j+k}(M)\quad (\forall j,k\in \Z)$
\item
$F_k(M) = 0$ for $k \ll 0$
\end{enumerate} 

Moreover, $\{F_k(M)\}$ is called a good Bernstein filtration if
\begin{enumerate}
\setcounter{enumi}{3}
\item
$F_k(M)$ is finite dimensional over $K$ for any $k \in \Z$.
\item
$F_j(D_n)F_k(M) = F_{j+k}(M)$ $(\forall j \geq 0)$ holds for $k \gg 0$. 
\end{enumerate}

Let $\{F_k(M)\}$ be a good Bernstein filtration of $M$. 
Then there exists a polynomial  
$h(T) = h_dT^d + h_{d-1}T^{d-1} + \cdots + h_0\in \Q[T]$ such that 
\[
\dim_K F_k(M) = h(k) \quad (k \gg 0)
\]
and $d!h_d$ is a positive integer.  
We call $h(T)$ the Hilbert polynomial of $M$ with respect to 
the filtration $\{F_k(M)\}$. 
The leading term of $h(T)$ does not depend on the choice of 
a good Bernstein filtration $\{F_k(M)\}$.  
The degree $d$ of the Hilbert polynomial $h(T)$ is 
called the dimension of $M$ and denoted by $\dim M$. 
The multiplicity of $M$ is defined to be $d!h_d$, which 
we denote by $\mult M$. 

If $M \neq 0$, then the dimension of $M$ is not less than $n$ 
(Bernstein's inequality). 
We call $M$ holonomic if $M=0$ or $\dim M = n$. 
It is known that $H^j_I(R)$ is holomomic for any ideal $I$ of $R$ 
and any integer $j$ (\cite{K}). 

If $M$ is a holonomic left $D_n$-module, we have an equality
$\length M \leq \mult M$. Moreover, the multiplicity 
is additive for holonomic left $D_n$-modules.

\begin{lemma}\label{lemma:1}
Let $h_0 = h_0(x) \in K[x]$ be a linear polynomial  
and $I$ be an ideal of $R := K[x]$. 
Let $R':= R/Rh_0$ be the affine ring 
associated with the hyperplane $h_0(x) = 0$ 
and set $I' = (I + Rh_0)/Rh_0$. 
Then we have 
\[
\length H_{I + Rh_0}^i(R) = \length H_{I'}^{i-1}(R'),
\qquad
\mult H_{I + Rh_0}^i(R) = \mult H_{I'}^{i-1}(R')
\]
for any integer $i$. 
\end{lemma}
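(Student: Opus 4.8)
The plan is to reduce the statement, via Kashiwara's equivalence, to a base-change computation for local cohomology. First I would choose an invertible affine-linear change of coordinates sending $h_0$ to $x_n$; since $h_0$ has degree one this is possible, and the induced automorphism of $D_n$ preserves the Bernstein filtration, hence preserves dimension, length and multiplicity, and carries $H^i_{I+Rh_0}(R)$ and $H^{i-1}_{I'}(R')$ to the analogous objects built from the image of $I$. So from now on $h_0 = x_n$, $R' = K[x_1,\dots,x_{n-1}]$, $D' := D_{n-1}$, $i : \{x_n=0\}\hookrightarrow K^n$ the inclusion, and $J := I + Rx_n$.

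Next I would record the base-change isomorphism
\[
R\Gamma_{J}(R')\;\cong\;R\Gamma_{J}(R)\otimes^{L}_R R'
\]
in the derived category, compatibly with the differential operator actions. This rests on the standard fact that local cohomology is computed by the (extended) \v{C}ech complex $C^\bullet$ on generators of $J$, which is a bounded complex of flat $R$-modules carrying a natural $D_n$-structure: one has $R\Gamma_{J}(M)\cong M\otimes_R C^\bullet$ for every module $M$, and $R'\otimes_R C^\bullet$ simultaneously computes $R\Gamma_{I'}(R')$ as a complex of $D'$-modules and, by flatness of $C^\bullet$, represents $R\Gamma_{J}(R)\otimes^{L}_R R'$. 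Using the length-two free resolution $0\to R\xrightarrow{x_n}R\to R'\to 0$ of the right-hand factor and the resulting two-column spectral sequence, this yields short exact sequences of $D'$-modules
\[
0 \to \mathrm{coker}\bigl(x_n : H^{i-1}_{J}(R)\to H^{i-1}_{J}(R)\bigr) \to H^{i-1}_{I'}(R') \to \ker\bigl(x_n : H^{i}_{J}(R)\to H^{i}_{J}(R)\bigr)\to 0
\]
for every integer $i$.

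Now I would invoke that each $H^j_{J}(R)$ is a holonomic $D_n$-module supported on $\{x_n = 0\}$ (holonomicity of local cohomology being quoted from \cite{K}), so by Kashiwara's equivalence $H^j_{J}(R)\cong i_+N_j$ for a unique holonomic $D'$-module $N_j$. A one-line computation inside $i_+N = N[\partial_n]$ using the relation $x_n\partial_n^k = \partial_n^k x_n - k\partial_n^{k-1}$ shows that multiplication by $x_n$ on $i_+N$ is surjective with kernel the degree-zero copy of $N$; hence the left-hand term in the exact sequence above vanishes and $H^{i-1}_{I'}(R')\cong N_i$, i.e.\ $H^{i-1}_{I'}(R')$ is precisely the $D'$-module attached by Kashiwara's equivalence to $H^i_{I+Rx_n}(R)$.

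It remains to see that $i_+$ preserves length and multiplicity. Length is clear, since $i_+$ is an exact equivalence onto the category of holonomic $D_n$-modules supported on $\{x_n=0\}$, matching submodules with submodules and simple objects with simple objects. For the multiplicity, a good Bernstein filtration $\{F_k(N)\}$ induces the good filtration $F_k(i_+N) = \sum_{j\ge 0}\partial_n^{\,j}F_{k-j}(N)$ on $i_+N$, whose Hilbert function equals $\sum_{l\le k}\dim_K F_l(N)$ for $k\gg 0$; passing to this partial sum raises the degree of the Hilbert polynomial by one and divides the leading coefficient by the new degree, so $\mult i_+N = \mult N$. Putting everything together and transporting back through the coordinate change gives $\length H^i_{I+Rh_0}(R) = \length N_i = \length H^{i-1}_{I'}(R')$ and the same for multiplicities. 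The step I expect to be the main obstacle is the careful one: making the base-change isomorphism and the spectral-sequence exact sequence genuinely $D'$-linear, and fixing the normalization of Kashiwara's functor $i_+$ so that the kernel/cokernel computation and the Hilbert-polynomial count are unambiguous.
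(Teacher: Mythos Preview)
Your proof is correct and arrives at the same identification $H^i_{I+Rh_0}(R)\cong i_+H^{i-1}_{I'}(R')$ as the paper, but by a genuinely different route. The paper first uses that $H^j_{Rh_0}(R)=0$ for $j\neq 1$ to obtain $H^i_{I+Rh_0}(R)\cong H^{i-1}_{I+Rh_0}(H^1_{Rh_0}(R))$, and then exploits the tensor splitting $H^1_{Rx_n}(R)\cong R'\otimes_K H^1_{(x_n)}(K[x_n])$ to compute the relevant \v{C}ech complex by hand: the summands localized at $x_n$ vanish, and what is left is $H^{i-1}_{I'}(R')\otimes_K H^1_{(x_n)}(K[x_n])$, visibly the $D$-module direct image. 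You instead base-change $R\Gamma_J(R)$ along $R\to R'$ via the two-column spectral sequence coming from the Koszul resolution of $R'$, and then use the explicit $x_n$-action on $i_+N$ to collapse it. Your approach is more portable---it generalizes immediately to higher-codimension smooth closed immersions once one has the Koszul resolution and Kashiwara's equivalence---and it explains structurally why the cohomological degree drops by one; the paper's approach is more elementary and keeps the $D_n$-structure explicit at every step, so it never has to confront the $D'$-linearity issue you flag for the spectral sequence (that issue is genuine but easily handled, since $x_n$ commutes with $D_{n-1}$). The treatment of length via Kashiwara's equivalence and of multiplicity via the induced filtration $\sum_{j\ge 0}\partial_n^{\,j}F_{k-j}(N)$ and its partial-sum Hilbert function is essentially identical to the paper's.
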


\begin{proof}
Since $H^i_{Rh_0}(R) = 0$ for $i \neq 1$, there is an isomorphism
\[
H^i_{I+Rh_0}(R) \cong H^{i-1}_{I+Rh_0}(H^1_{Rh_0}(R)).
\]
We may assume by an affine coordinate transformation, 
which preserves the Bernstein filtration, that 
$h_0(x) = x_n$. 
Then we may regard $R' = K[x_1,\dots,x_{n-1}]$ and 
have an isomorphism
\[
H^1_{Rh_0}(R) \cong R'\otimes_K H^1_{(x_n)}(K[x_n]) 
, 
\]
where the tensor product on the right-hand side is a left module
over $D_n = D_{n-1}\otimes_K D_1$ with $D_1$ being the ring of differential 
operators in the variable $x_n$. 

Let $\{f_1,\dots,f_r\}$ be a set of generators of $I$. 
We may assume 
that $f_1,\dots,f_r$ belong to $R'$. 
Then for $0 \leq i_1 < \cdots < i_k \leq r$ with $k \in \N$, 
the localization by $f_{i_1}\cdots f_{i_k}$ yields
\[
(H^1_{Rh_0}(R))_{f_{i_1}\cdots f_{i_k}}
= R'_{f_{i_1}\cdots f_{i_k}}\otimes_K H^1_{(x_n)}(K[x_n]). 
\] 
On the other hand, we have 
\[
(H^1_{Rh_0}(R))_{x_n} = R'\otimes_K (H^1_{(x_n)}(K[x_n]))_{x_n}
= 0. 
\]
Hence $H^{i-1}_{I+Rh_0}(H^1_{Rh_0}(R))$ is the $(i-1)$-th cohomology group 
of the \v{C}ech complex
\begin{multline}
0 \longrightarrow R'\otimes_K H^1_{(x_n)}(K[x_n]) 
\longrightarrow \bigoplus_{1 \leq i \leq r}R'_{f_i}\otimes_K H^1_{(x_n)}(K[x_n]) 
\\
\longrightarrow \bigoplus_{1 \leq i_1 < i_2\leq r}R'_{f_{i_1}f_{i_2}}\otimes_K H^1_{(x_n)}(K[x_n]) 
\longrightarrow \cdots \longrightarrow 
R'_{f_{1}\cdots f_{r}}\otimes_K H^1_{(x_n)}(K[x_n]) \longrightarrow 0, 
\nonumber
\end{multline}
which is isomorphic to $H^{i-1}_{I'}(R') \otimes_K H^1_{(x_n)}(K[x_n])$. 
This implies
\begin{align*}
H^i_{I+Rh_0}(R) &\cong H^{i-1}_{I'}(R')\otimes_K H^1_{(x_n)}(K[x_n])
\\&
\cong  H^{i-1}_{I'}(R')\otimes_K (D_1/D_1x_n)
\cong  (D_n/D_nx_n)\otimes_{D_{n-1}} H^{i-1}_{I'}(R')
,
\end{align*}
where $D_n/D_{n}x_n$ is regarded as a $(D_n,D_{n-1})$-bimodule. 
The rightmost term is the $D$-module theoretic direct image 
of $H^{i-1}_{I'}(R')$ 
with respect to 
the inclusion $H_0 := \{x \in V \mid x_n=0\} \rightarrow V$. 
In view of Kashiwara's equivalence in the category of algebraic $D$-modules 
(see e.g., Theorem 7.11 of \cite{Borel} or Theorem 1.6.1 of \cite{HTT}), 
there is a one-to-one correspondence 
between the $D_{n-1}$-submodules $M$ of $H^{i-1}_{I'}(R')$ and 
the $D_n$-submodules $M \otimes_K H^1_{(x_n)}(K[x_n])$ 
of $H^i_{I+Rh_0}(R)$.  
This implies 
\[
\length H^i_{I+Rh_0}(R) = \length H^{i-1}_{I'}(R'). 
\]

Next, let us show 
\[
\mult H^i_{I+Rh_0}(R) = \mult H^{i-1}_{I'}(R').
\]
Let $\{F_k\}$ be a good Bernstein filtration of $H^{i-1}_{I'}(R')$ 
and set $m = \mult H^{i-1}_{I'}(R')$.
We may assume $F_k = 0$ for $k <0$. 
Then there exists a polynomial $p(k)$ and $k_1 \in \Z$ such that
\[
 \dim_K F_k = p(k) = \frac{m}{(n-1)!}k^{n-1} 
+ \mbox{(terms with degree $< n-1$)}
\]
holds for $k \geq k_1$. Define a filtration $\{G_k\}$ 
on $H^{i-1}_{I'}(R')\otimes_K H^1_{(x_n)}(K[x_n^{-1}])$ by
\[
G_k := \sum_{j=0}^k F_j\otimes_K (K[x_n^{-1}] + \cdots 
+ K[x_n^{-(k-j)-1}])
= \bigoplus_{j=0}^k F_j \otimes_K K[x_n^{-(k-j)-1}]
. 
\]
It is easy to see that $\{G_k\}$ is a good Bernstein filtration. 
Hence we have
\[
\dim_K G_k = \sum_{j=0}^k \dim_K F_j 
= \sum_{j=0}^{k_1-1} \dim_K F_j 
+ \sum_{j=k_1}^{k} p(j).
\]
By the assumption, there exists a polynomial $q(k)$ of degree 
$\leq n-2$ such that 
\[
p(j) = \frac{m}{(n-1)!}j(j+1)\cdots (j+n-2) + q(j). 
\]
Since
\[
\sum_{j=k_1}^{k}j(j+1)\cdots (j+n-2) 
= \frac{1}{n}\{k(k+1)\cdots (k+n-1) - (k_1-1)k_1\cdots (k_1+n-2)\}
,
\]
we have
\[
\dim_K G_k = \frac{m}{n!}k^n 
+ \mbox{(terms with degree $< n$)} 
\qquad (\forall k \geq k_1).
\]
Thus we also have $\mult H^i_{I + Rx_n}(R) = m$. 
This completes the proof. 
\end{proof}

\section{Hyperplane arrangements}

Let $f \in K[x]$ be a multiple of essentially distinct 
linear polynomials. 
Let $\Asc$ be the hyperplane arrangement in $V := K^n$ defined by 
$f$. 

\begin{theorem}\label{th:th1}
Let $H_0$ be an element  of $\Asc$. 
Set $\Asc':= \Asc \setminus \{H_0\}$ 
and let $f'$ be the product of the defining polynomials 
of hyperplanes belonging to $\Asc'$.  
Let us regard
\[
\Asc'' := \{H \cap H_0 \mid H \in \Asc', \, 
H \cap H_0 \neq \emptyset\}
\]
as a hyperplane arrangement in the affine space $H_0$.  
Let $R'' = R/R h_0$ be the affine ring of $H_0$, where 
$h_0$ is a polynomial of first degree defining $H_0$.
Let $f'' \in R''$ be the product of the defining polynomials of 
the elements of $\Asc''$.
Then we have
\begin{align*}&
\length H_{(f)}^1(R) = \length H_{(f')}^1(R) + \length H^1_{(f'')}(R'') + 1,
\\&
\mult H_{(f)}^1(R) = \mult H_{(f')}^1(R) + \mult H^1_{(f'')}(R'') + 1.
\end{align*}
\end{theorem}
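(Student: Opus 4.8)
The plan is to run a Mayer--Vietoris-type computation with an explicit \v Cech complex and then to reduce the two ``new'' contributions to Lemma~\ref{lemma:1}. Since the local cohomology of $R$ depends only on the radical of the supporting ideal, it suffices to prove the claim with $f$ replaced by $f'h_0$; note that $f'$ and $h_0$ are coprime in the UFD $R$, because $H_0\notin\Asc'$ and the hyperplanes of $\Asc'$ are pairwise distinct.

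First I would write down the \v Cech complex of $R$ with respect to the two elements $f',h_0$,
\[
0 \longrightarrow R \xrightarrow{r\mapsto(r,r)} R_{f'}\oplus R_{h_0}
\xrightarrow{(a,b)\mapsto a-b} R_{f'h_0} \longrightarrow 0 ,
\]
a complex of left $D_n$-modules (each localization carries a natural $D_n$-action and the localization maps are $D_n$-linear), whose cohomology is $H^\bullet_{(f',h_0)}(R)$. Because $R$ is a domain, $H^0_{(f',h_0)}(R)=0$; because $f'$ and $h_0$ are coprime, a short computation inside $R_{f'h_0}$ shows $R_{f'}\cap R_{h_0}=R$, which is precisely the vanishing $H^1_{(f',h_0)}(R)=0$. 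The differential $d^1$ has image $R_{f'}+R_{h_0}$, so $H^2_{(f',h_0)}(R)=R_{f'h_0}/(R_{f'}+R_{h_0})$. Now $H^1_{(f)}(R)=R_{f'h_0}/R$ contains the $D_n$-submodule $(R_{f'}+R_{h_0})/R$ with quotient $H^2_{(f',h_0)}(R)$, and since $R_{f'}\cap R_{h_0}=R$ this submodule is the internal direct sum $(R_{f'}/R)\oplus(R_{h_0}/R)=H^1_{(f')}(R)\oplus H^1_{(h_0)}(R)$. Hence I obtain a short exact sequence of left $D_n$-modules
\[
0\longrightarrow H^1_{(f')}(R)\oplus H^1_{(h_0)}(R)\longrightarrow H^1_{(f)}(R)\longrightarrow H^2_{(f',h_0)}(R)\longrightarrow 0 .
\]

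Next I would identify the two new terms by Lemma~\ref{lemma:1}. Applied with $I=Rf'$ and $i=2$, it gives $\length H^2_{(f',h_0)}(R)=\length H^1_{R''\bar f'}(R'')$ and $\mult H^2_{(f',h_0)}(R)=\mult H^1_{R''\bar f'}(R'')$, where $\bar f'\in R''$ is the image of $f'$. The polynomial $\bar f'$ is the product of the restrictions to $H_0$ of the defining forms of the hyperplanes of $\Asc'$: a factor coming from a hyperplane parallel to $H_0$ restricts to a nonzero constant, while the remaining factors cut out exactly the hyperplanes of $\Asc''$, possibly with multiplicities. Thus $R''\bar f'$ and $R''f''$ have the same radical, so $H^1_{R''\bar f'}(R'')\cong H^1_{(f'')}(R'')$; in particular these have the same length and multiplicity. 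For the other new term, applying Lemma~\ref{lemma:1} with $I=0$ and $i=1$ (or, concretely, using that $H^1_{(h_0)}(R)=R_{h_0}/R$ is the $D$-module direct image under $H_0\hookrightarrow V$ of the polynomial ring $R''$, which is a simple $D_{n-1}$-module of multiplicity $1$) gives $\length H^1_{(h_0)}(R)=\mult H^1_{(h_0)}(R)=1$.

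Finally, every module appearing is holonomic by the holonomicity of local cohomology recalled above, so length and multiplicity are additive along the short exact sequence; substituting the two identifications yields exactly
\[
\length H^1_{(f)}(R)=\length H^1_{(f')}(R)+\length H^1_{(f'')}(R'')+1
\]
together with the analogous identity for $\mult$. I expect the only points needing real care to be the coprimality bookkeeping that kills $H^1_{(f',h_0)}(R)$ and splits the kernel, and the passage from the possibly non-reduced restriction $\bar f'$ (which may even pick up a unit factor from hyperplanes parallel to $H_0$) to the reduced defining polynomial $f''$ of $\Asc''$; I also want to keep track of $D_n$-linearity throughout, so that additivity of length and multiplicity is legitimate, but this is automatic here since every map in sight is induced by localization. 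The genuinely substantial input is Lemma~\ref{lemma:1}; what remains is this assembly.
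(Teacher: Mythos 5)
Your proposal is correct and follows essentially the same route as the paper: the Mayer--Vietoris short exact sequence $0\to H^1_{(f')}(R)\oplus H^1_{(h_0)}(R)\to H^1_{(f)}(R)\to H^2_{(f')+(h_0)}(R)\to 0$ (which you derive explicitly from the \v Cech complex), followed by Lemma~\ref{lemma:1} and additivity of length and multiplicity. Your extra care about the coprimality of $f'$ and $h_0$ and about the passage from the possibly non-reduced, unit-contaminated restriction $\bar f'$ to the reduced $f''$ via equality of radicals is a point the paper glosses over, and is handled correctly.
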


\begin{proof} 
By the  Mayer-Vietoris exact sequence, we get an exact sequence 
\[
0 \longrightarrow  H^1_{(f')}(R) \oplus H^1_{(h_0)}(R) 
\longrightarrow
H^1_{(f)}(R) \longrightarrow H^2_{(f')+(h_0)}(R) 
\longrightarrow 0
\]
of holonomic left $D_n$-modules. 
Since the length and the multiplicity of $H^1_{(h_0)}(R)$ are both one, 
it follows that
\begin{align}
\length H_{(f)}^1(R) &= \length H_{(f')}^1(R) + \length H^2_{(f') +(h_0)}(R) + 1,
\nonumber\\
\mult H_{(f)}^1(R) &= \mult H_{(f')}^1(R) + \mult H^2_{(f')+(h_0)}(R) + 1.
\label{eq:MV}
\end{align}
Since $(f'') = R''f'' \cong (Rf'+Rh_0)/Rh_0$, 
Lemma \ref{lemma:1} implies 
\begin{align*}
\mult H^2_{(f')+(h_0)}(R) &= \mult H^1_{(f'')}(R''), 
\\
\length H^2_{(f')+(h_0)}(R)& = \length H^1_{(f'')}(R'').   
\end{align*}
This completes the proof in view of (\ref{eq:MV}).
\end{proof}

\begin{corollary}
$\length H_{(f)}^1(R) = \mult H_{(f)}^1(R)$. 
\end{corollary}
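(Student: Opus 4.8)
The plan is to prove this by induction on the number $N = |\Asc|$ of hyperplanes in the arrangement defined by $f$, carried out uniformly over all polynomial rings $K[x_1,\dots,x_n]$ (with $n$ allowed to vary), so that the induction hypothesis can be invoked for arrangements living in a space of smaller dimension. For the base case $N=0$ the polynomial $f$ is a nonzero constant, so $H^1_{(f)}(R) = R[f^{-1}]/R = 0$ and both $\length$ and $\mult$ equal $0$; there is nothing to prove. (The case $N=1$, where $H^1_{(f)}(R) = H^1_{(h_0)}(R)$ has length and multiplicity both $1$, is also immediate and could serve as the base case instead.)

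For the inductive step I would assume $N \geq 1$ and that $\length H^1_{(g)}(S) = \mult H^1_{(g)}(S)$ holds for every polynomial ring $S$ over $K$ and every $g \in S$ defining an arrangement with at most $N-1$ hyperplanes. Choosing a hyperplane $H_0 \in \Asc$ and forming $\Asc'$, $f'$, $\Asc''$, $f''$, $R''$ as in Theorem \ref{th:th1}, one sees that $\Asc'$ is an arrangement of exactly $N-1$ hyperplanes in $K^n$, while $\Asc''$ is an arrangement of at most $N-1$ hyperplanes in the affine space $H_0 \cong K^{n-1}$: indeed $\Asc''$ is by definition a set of distinct subspaces $H \cap H_0$ with $H \in \Asc'$, so it cannot have more members than $\Asc'$, and it may well have fewer (or even be empty, when every hyperplane of $\Asc'$ is parallel to $H_0$, in which case $f''$ is a unit and $H^1_{(f'')}(R'') = 0$). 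Since $R''$ is again a polynomial ring over $K$ of characteristic zero and $f''$ defines a hyperplane arrangement there, the induction hypothesis applies to both $f'$ and $f''$.

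It then remains only to substitute these equalities into the two identities of Theorem \ref{th:th1}:
\[
\length H^1_{(f)}(R) = \length H^1_{(f')}(R) + \length H^1_{(f'')}(R'') + 1 = \mult H^1_{(f')}(R) + \mult H^1_{(f'')}(R'') + 1 = \mult H^1_{(f)}(R),
\]
which completes the induction.

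I do not expect a serious obstacle here; the one point that must be handled correctly is the organization of the induction. Because $f''$ sits in a polynomial ring with one fewer variable, an induction on $n$ alone would not close, so the induction has to be on the number of hyperplanes with the ambient dimension left free; and one must observe — trivially, from its definition — that $\Asc''$ has no more hyperplanes than $\Asc'$, hence strictly fewer than $\Asc$. Everything else is a direct appeal to Theorem \ref{th:th1}.
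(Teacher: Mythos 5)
Your proof is correct and is exactly the argument the paper intends: induction on $\sharp\Asc$ (with the ambient dimension left free) using the two identities of Theorem \ref{th:th1}, the paper merely leaving the details as "easily proved." Your explicit observation that $\Asc''$ has at most $\sharp\Asc - 1$ elements, so the induction closes, is the one point worth spelling out and you handle it properly.
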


\begin{proof}
This can be easily proved by induction on $\sharp\Asc$ 
by using Theorem \ref{th:th1}. 
\end{proof}

The intersection poset  $L(\Asc)$ is the set of 
the non-empty intersections of elements of $\Asc$. 
For $X,Y \in L(\Asc)$, define $X \leq Y$ if and only if 
$X \supset Y$. 
For $X,Y \in L(\Asc)$, 
the M\"obius function $\mu(X,Y)$ is defined recursively by
\[
\mu(X,Y) = 
\left\{ \begin{array}{ll}\displaystyle
-\sum_{X \leq Z < Y} \mu(X,Z) & 
\mbox{if $X < Y$ } 
\\
1 & \mbox{if $X = Y$} 
\\
0 & \mbox{otherwise}
\end{array}\right.
\]
Set $\mu(X) = \mu(V,X)$. 
Then $(-1)^{\codim X}\mu(X)$ is positive 
(see e.g.\ Theorem 2.47 of \cite{OT}). 
The Poincar\'e polynomial of the arrangement $\Asc$ is defined by
\[
\pi(\Asc,t) = \sum_{X \in L(\Asc)} \mu(X)(-t)^{\codim X}.
\]

\begin{theorem}\rm
\[
\length H_{(f)}^1(R)  = \pi(\Asc, 1) -1 
= \sum_{X \in L(\Asc) \setminus\{V\}} |\mu(X)|. 
\]
\end{theorem}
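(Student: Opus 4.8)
The plan is to prove the first equality $\length H^1_{(f)}(R) = \pi(\Asc,1)-1$ by induction on $\sharp\Asc$, letting the ambient dimension $n$ vary freely, by playing the recursion of Theorem~\ref{th:th1} against the classical deletion--restriction recursion for the Poincar\'e polynomial; the second equality will then follow immediately from the positivity of $(-1)^{\codim X}\mu(X)$.

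First I would invoke deletion--restriction for the Poincar\'e polynomial (Theorem~2.56 of \cite{OT}): for the triple consisting of $\Asc$, the deletion $\Asc' = \Asc\setminus\{H_0\}$, and the restriction $\Asc''$ to $H_0$ --- which is exactly the arrangement defined in Theorem~\ref{th:th1} --- one has $\pi(\Asc,t) = \pi(\Asc',t) + t\,\pi(\Asc'',t)$. Evaluating at $t = 1$ and subtracting $1$ from both sides gives
\[
\pi(\Asc,1) - 1 = \bigl(\pi(\Asc',1) - 1\bigr) + \bigl(\pi(\Asc'',1) - 1\bigr) + 1,
\]
which is visibly the same shape as the length recursion in Theorem~\ref{th:th1}.

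The induction is then routine. For the base case $\sharp\Asc = 0$, the polynomial $f$ is a nonzero constant, so $H^1_{(f)}(R) = 0$ has length $0$, while $\pi(\Asc,t) = 1$ gives $\pi(\Asc,1) - 1 = 0$. For the inductive step, with $\sharp\Asc \geq 1$, pick any $H_0 \in \Asc$ and form $\Asc'$, $\Asc''$, $f'$, $f''$ as in Theorem~\ref{th:th1}; since $\sharp\Asc' = \sharp\Asc - 1$ and $\sharp\Asc'' \leq \sharp\Asc - 1$, with $\Asc''$ living in the affine space $H_0 \cong K^{n-1}$, the induction hypothesis applies to both $H^1_{(f')}(R)$ and $H^1_{(f'')}(R'')$. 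Combining their values with the length recursion of Theorem~\ref{th:th1} and the displayed identity above yields $\length H^1_{(f)}(R) = \pi(\Asc,1) - 1$. For the second equality, by definition $\pi(\Asc,1) = \sum_{X \in L(\Asc)} (-1)^{\codim X}\mu(X)$, and since every summand $(-1)^{\codim X}\mu(X)$ is positive it equals $|\mu(X)|$; separating off the term $X = V$, which equals $\mu(V) = 1$, gives $\pi(\Asc,1) - 1 = \sum_{X \in L(\Asc)\setminus\{V\}} |\mu(X)|$.

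The only step requiring genuine care is the use of deletion--restriction: it is applied in the affine (not merely central) setting, so one should confirm that $(\Asc,\Asc',\Asc'')$ is a bona fide triple in the sense of \cite{OT} with the present definition of $\Asc''$, and that the degenerate cases are harmless --- namely $\Asc'$ empty, or $H_0$ parallel to every other hyperplane so that $\Asc'' = \emptyset$ inside $H_0$. In each such case $\pi(\emptyset,t) = 1$ while the corresponding local cohomology vanishes, so both the recursion and the induction remain valid. Everything else is bookkeeping, and the multiplicity version of the statement needs nothing further, since $\length H^1_{(f)}(R) = \mult H^1_{(f)}(R)$ was already established in the corollary.
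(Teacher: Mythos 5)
Your proposal is correct and follows essentially the same route as the paper: induction on $\sharp\Asc$ combining the length recursion of Theorem~\ref{th:th1} with the deletion--restriction identity $\pi(\Asc,t)=\pi(\Asc',t)+t\,\pi(\Asc'',t)$, plus the positivity of $(-1)^{\codim X}\mu(X)$ for the second equality. The only (harmless) differences are that you start the induction at the empty arrangement rather than at a single hyperplane, and that you explicitly check the degenerate cases, which the paper leaves implicit.
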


\begin{proof}
Let $H_0$ be an element of $\Asc$ defined by a first degree polynomial $h_0$. 
Let us prove the equality by induction on $\sharp \Asc$. 
Since $H_{(h_0)}(R)$ is simple, the equality holds if $\Asc = \{H_0\}$ 
with $\pi(\Asc,1) = 2$.  
Let $\Asc'$, $\Asc''$ be as in the proof of Theorem \ref{th:th1}. 
By the induction hypothesis, we have
\[
\length H^1_{(f')}(R) = \pi(\Asc',1)-1, 
\qquad
 \length H^1_{(f'')}(R'') = \pi(\Asc'',1) -1.
\]
Hence by Theorem \ref{th:th1} we get
\begin{align*}
\length H^1_{(f)}(R) &= \length H^1_{(f')}(R) + \length H^1_{(f'')}(R'') + 1 
\\&
= \pi(\Asc',1) + \pi(\Asc'',1) - 1.
\end{align*}
On the other hand, $\pi(\Asc,t) = \pi(\Asc',t) + t\pi(\Asc'',t)$ 
holds (see e.g., Theorem 2.56 of \cite{OT}). 
Thus we get
\begin{align*}
\length H_{(f)}^1(R) &= \pi(\Asc',1) + \pi(\Asc'',1) -1 
= \pi(\Asc,1) -1. 
\end{align*}
This completes the proof. 
\end{proof}

\end{document}